\font\logic=msam10 at 10pt
\newcommand{\restrict}{\mbox{\logic\char'026}}
\newtheorem{thrm}{Theorem}[section]
\newtheorem{cor}[thrm]{Corollary}
\newtheoremstyle{hdefinition}%
  {\topsep}%
  {\topsep}%
  {\upshape}
  {}%
  {\bfseries}%
  {.}
  { }%
  {\thmnumber{#2 }\thmname{#1}\thmnote{ \rm(#3)}}%
\newtheoremstyle{hclaim}%
  {\topsep}%
  {\topsep}%
  {\itshape}%
  {}%
  {\bfseries}%
  {.}
  { }%
  {\thmname{#1}\thmnote{ \rm#3}}%
\newtheoremstyle{hnotation}%
  {\topsep}%
  {\topsep}%
  {\upshape}%
  {}%
  {\bfseries}%
  {.}
  { }%
  {\thmname{#1}\thmnote{ \rm#3}}%
\theoremstyle{hclaim}
\newtheorem*{claim*}{Claim}
\theoremstyle{hdefinition}
\newtheorem{df}[thrm]{Definition}
\newtheorem{remark}[thrm]{Remark}
\theoremstyle{hclaim}
\theoremstyle{hnotation}
\begin{document}

\title{The Filter Dichotomy and medial limits}

\author{Paul B. Larson\thanks{
Supported in part by NSF grant DMS-0801009.}}

\maketitle

\begin{abstract} The \emph{Filter Dichotomy} says that every uniform nonmeager
filter on the integers is mapped by a finite-to-one function to an
ultrafilter. The consistency of this principle was proved by Blass and Laflamme.
A \emph{medial limit} is a universally measurable function from
$\mathcal{P}(\omega)$ to the unit interval $[0,1]$ which is finitely additive for
disjoint sets, and maps singletons to $0$ and $\omega$ to $1$.
Christensen and Mokobodzki independently showed that the Continuum Hypothesis implies the existence of medial limits.
We show that the Filter Dichotomy implies
that there are no medial limits.
\end{abstract}

\noindent 2000 Mathematics Subject Classification 03E35; 28A20

\section{Universally measurable sets}

A \emph{measure} on a set $X$ is a function $\mu$ whose domain is some $\sigma$-algebra of subsets of $X$, with codomain $[0, \infty]$, such that $\mu$ is countably additive for disjoint families (by default, measures are countably additive, but we deal also with \emph{finitely additive measures}, which are finitely additive but not necessarily countably additive). A set is said to be \emph{measurable} with respect to $\mu$ if it is in the domain of $\mu$. A \emph{Borel measure} is a measure on a topological space whose domain contains the Borel subsets of the space. A measure is \emph{complete} if all subsets of sets of measure $0$ are in the domain of the measure (and thus have measure 0). The \emph{completion} of a measure is the smallest complete measure extending it.  If $\mu$ is a Borel measure on a topological space $X$, and $\mu^{*}$ is the completion of $\mu$, then a set $A \subseteq X$ is in the domain of $\mu^{*}$ if and only if there is a set $B$ in the domain of $\mu$ such that the symmetric difference $A \bigtriangleup B$ is contained in a set of $\mu$-measure 0 (see 212C of \cite{F2}).
A measure $\mu$ on a set $X$ is a \emph{probability} measure if $\mu(X) = 1$, \emph{finite} if $\mu(X)$ is finite, $\sigma$-\emph{finite} if $X$ is a countable union of sets of finite measure, and \emph{atomless} if singletons have measure 0.

A subset of a topological space is said to be \emph{universally measurable} if it is measurable with respect to every complete $\sigma$-finite
Borel measure on the space (see \cite{Ke, Ni}, and 434D of \cite{F4}, for instance).
The collection of universally measurable sets does not change if one replaces ``$\sigma$-finite" with ``finite"  or requires the measures to be atomless (see 211X(e) of \cite{F2}).

We will make use of the following standard observation.

\begin{remark}\label{immeas} If $f \colon X \to Y$ is a homeomorphism between topological spaces, then any Borel measure $\mu$ on $Y$ induces a measure $\mu^{*}$ on $X$ defined by letting $\mu^{*}(B) = \mu(f[B])$ for any Borel $B \subseteq X$. It follows that, in this context, for any universally measurable $A \subseteq X$, $f[A]$ is a universally measurable subset of $Y$.
\end{remark}



\section{Medial limits}

A function between topological spaces is \emph{universally measurable} if all preimages of open sets (equivalently, Borel sets) are universally measurable.
For our purposes, a \emph{medial limit} is a universally measurable function from $\mathcal{P}(\omega)$ to $[0,1]$ which is finitely additive for
disjoint sets, and maps singletons to $0$ and $\omega$ to $1$ (i.e., a universally measurable finitely additive measure on $\mathcal{P}(\omega)$ giving $\omega$ measure $1$ and singletons measure $0$). Equivalently, a \emph{medial limit} is a function $f \colon \mathcal{P}(\omega) \to [0,1]$, finitely additive for disjoint sets and mapping singletons to $0$ and $\omega$ to $1$, such that
for every complete, $\sigma$-finite Borel measure $\mu$ on $\mathcal{P}(\omega)$ there is a Borel function $b \colon \mathcal{P}(\omega) \to [0,1]$
such that $\{ x \subseteq \omega \mid f(x) \neq b(x)\}$ is $\mu$-null. Medial limits (in various forms) appear the following publications, among others :
\cite{MaPrWe, Ta85, Fi, Ke91, Co, La, JaKeLo, Ka}.
Christensen and
Mokobodzki (see \cite{Ch, Me}) independently showed that medial limits exist under the assumption that the Continuum Hypothesis holds. This assumption was weakened to Martin's Axiom by Normann \cite{No}. As far as we know, the weakest hypothesis known to be sufficient is the statement that the reals are not a union of fewer than continuum many meager sets (i.e., that the covering number for the meager ideal is the continuum). This was apparently known to Mokobodzki in the 1970's (see also 538S of \cite{F5}). The term ``medial limit" is often used for the corresponding linear functional on $\ell^{\infty}$ (see \cite{Me}; 538Q of \cite{F5}).








Godefroy and Talagrand \cite{GoTa} proved in 1977 that if $f$ is a medial limit, then the filter $\{ x\subseteq \omega \mid f(x) = 1\}$ does not have the property of Baire. The next two theorems present their proof of this fact, using terminology from Farah's \cite{Fa}. Note that a filter on $\omega$ has the property of Baire if and only if its corresponding ideal does.

An ideal $I$ on $\omega$ is said to be \emph{c.c.c.\ over Fin} if all almost disjoint families (i.e., families of infinite subsets of $\omega$ which pairwise have finite intersection) disjoint from $I$ are countable (this is Definition 3.3.1 of \cite{Fa}). We say that
an ideal on $\omega$ is \emph{uniform} if it contains all finite subsets of $\omega$, and similarly that a filter is uniform if its corresponding ideal is.

\begin{thrm}\label{medcc} If $f$ is a medial limit and $I = \{ x \subseteq \omega \mid f(x) = 0\}$, then $I$ is a universally measurable uniform c.c.c.\ over Fin ideal.
\end{thrm}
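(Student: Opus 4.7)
The plan is to verify each of the four claims about $I = \{x \subseteq \omega \mid f(x) = 0\}$ separately, with the c.c.c.\ over Fin property being the only substantive step.

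First I would check that $I$ is an ideal. Given $A, B \in I$, write $A \cup B = A \sqcup (B \setminus A)$ and $B = (B \cap A) \sqcup (B \setminus A)$, both disjoint unions. Finite additivity of $f$ together with $f(B) = 0$ and the fact that $f$ takes values in $[0,1]$ forces $f(B \setminus A) = 0$, and then $f(A \cup B) = 0$. Downward closure is analogous: if $B \subseteq A$ and $A \in I$, then $f(A) = f(B) + f(A \setminus B)$ forces $f(B) = 0$. Uniformity of $I$ is immediate because singletons have $f$-measure $0$ and finite sets are finite disjoint unions of singletons. For universal measurability, note that $I = f^{-1}(\{0\})$, a preimage of a closed (in particular Borel) subset of $[0,1]$ under the universally measurable function $f$, hence universally measurable.

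The content of the theorem lies in the c.c.c.\ over Fin assertion. Let $\mathcal{A}$ be an almost disjoint family of infinite subsets of $\omega$, each of which lies outside $I$. For each positive integer $n$, set
\[
\mathcal{A}_n = \{A \in \mathcal{A} \mid f(A) > 1/n\}.
\]
Since $\mathcal{A} = \bigcup_n \mathcal{A}_n$, it suffices to show each $\mathcal{A}_n$ is finite. Given any distinct $A_1,\ldots,A_k \in \mathcal{A}_n$, the almost disjointness of $\mathcal{A}$ implies that $F = \bigcup_{i < j} (A_i \cap A_j)$ is a finite union of finite sets, hence finite, so $f(F) = 0$. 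Setting $A_i' = A_i \setminus F$ yields pairwise disjoint sets with $f(A_i') = f(A_i) > 1/n$ (using the ideal property established above to absorb the finite discrepancy). Finite additivity of $f$ then gives
\[
1 \geq f(A_1' \cup \cdots \cup A_k') = \sum_{i=1}^{k} f(A_i') > k/n,
\]
so $k < n$. Thus $|\mathcal{A}_n| < n$ and $\mathcal{A}$ is countable.

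There is no real obstacle here; the crux is simply noticing that the finite pairwise intersections sit inside $I$ and can therefore be discarded without changing the $f$-values, which reduces the problem to the trivial bound $\sum f(A_i') \le 1$ for disjoint pieces.
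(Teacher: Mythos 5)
Your proposal is correct and follows essentially the same route as the paper: the ideal, uniformity, and universal measurability claims are read off from the definition of medial limit, and the c.c.c.\ over Fin property is obtained by splitting the almost disjoint family into the pieces where $f$ exceeds $1/n$ and bounding each piece by finite additivity (the paper states this bound directly for ``almost disjoint sets,'' while you justify it by discarding the finite pairwise intersections, which lie in $I$). The extra detail you supply is all sound; there is no gap.
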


\begin{proof} That $I$ is a universally measurable uniform ideal follows from the definition of medial limit. To see that $I$ is c.c.c.\ over Fin, let $A$ be an almost disjoint family disjoint from $I$, and suppose that $A$ is uncountable. Then there is a positive integer $n$ such that the set of $x \in A$ such that $f(x) \geq 1/n$ is uncountable. Since $f$ is finitely additive for almost disjoint sets, and $f(\omega) = 1$, any such set can have size at most $n$.
\end{proof}

The following is Lemma 3.3.2(c) of \cite{Fa}.

\begin{thrm}\label{ccnb} A uniform c.c.c.\ over Fin proper ideal on $\omega$ cannot have the property of Baire.
\end{thrm}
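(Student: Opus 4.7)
The plan is to proceed by contradiction using Talagrand's characterization of meager filters on $\omega$. Suppose $I$ is a uniform c.c.c.\ over Fin proper ideal on $\omega$ with the property of Baire. The first step is to upgrade ``Baire property'' to ``meager''. Since $I$ is uniform, it contains all finite sets, so $I \subseteq \mathcal{P}(\omega) \cong 2^{\omega}$ is closed under finite modifications and hence a tail set; by the Kuratowski $0$--$1$ law, a tail set with the Baire property is either meager or comeager. If $I$ were comeager, then its image under the complementation homeomorphism $A \mapsto \omega \setminus A$ would also be comeager, so the two comeager sets would meet at some $A$ with both $A$ and $\omega \setminus A$ in $I$, contradicting that $I$ is proper. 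Hence $I$ is meager.

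Next, I would apply Talagrand's theorem, which characterizes meager ideals extending the Fr\'echet filter: there exists a partition $\omega = \bigsqcup_{n \in \omega} J_{n}$ into finite intervals such that for every $A \in I$ the set $\{n : J_{n} \subseteq A\}$ is finite. (Dually, the associated filter $I^{*}$ satisfies: every element of $I^{*}$ meets all but finitely many $J_{n}$.) The c.c.c.\ over Fin hypothesis is then contradicted by lifting an almost disjoint family through this partition. Fix, in ZFC, an uncountable almost disjoint family $\{X_{\alpha} : \alpha < \omega_{1}\}$ of infinite subsets of $\omega$, and set $Y_{\alpha} = \bigcup_{n \in X_{\alpha}} J_{n}$. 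For $\alpha \neq \beta$ the pairwise intersection $Y_{\alpha} \cap Y_{\beta} = \bigcup_{n \in X_{\alpha} \cap X_{\beta}} J_{n}$ is a finite union of finite sets, hence finite, so $\{Y_{\alpha} : \alpha < \omega_{1}\}$ is an uncountable almost disjoint family; and since $X_{\alpha}$ is infinite, each $Y_{\alpha}$ contains infinitely many $J_{n}$, so by the defining property of the partition $Y_{\alpha} \notin I$. This is an uncountable almost disjoint family disjoint from $I$, contradicting c.c.c.\ over Fin.

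The main obstacle is the invocation of Talagrand's theorem; this is the genuine input, and without it one would have to do real work to extract an interval partition whose blocks are never eventually swallowed by elements of $I$. The other place that needs care is the $0$--$1$ law reduction from ``Baire property'' to ``meager'', but this is standard once one notes that uniformity of $I$ makes it a tail set. Given these two inputs, the rest of the argument is a one-line combinatorial lift of an uncountable ZFC almost disjoint family along the partition.
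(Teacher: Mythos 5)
Your proof is correct, but it takes a genuinely different route from the paper's. The paper argues directly with dense open sets: if $I$ is comeager in some basic open set $[s]$, one builds two sufficiently generic elements of $I$ whose union is cofinite (so $\omega \in I$ by uniformity, contradicting properness), and if $I$ is meager one runs a fusion argument through the complementary dense open sets to produce a \emph{perfect} almost disjoint family disjoint from $I$, contradicting c.c.c.\ over Fin. You instead outsource the category-theoretic work to two classical results: the Kuratowski $0$--$1$ law (legitimate here, since uniformity makes $I$ a tail set) collapses ``Baire property'' to ``meager or comeager,'' the comeager case is killed by the complementation homeomorphism, and the meager case is handled by Talagrand's interval-partition characterization followed by a transparent lift of an arbitrary uncountable almost disjoint family along the partition (your verification that the $Y_{\alpha}$ are infinite, pairwise almost disjoint, and outside $I$ is complete and correct). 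The trade-off: the paper's argument is self-contained and yields the stronger conclusion of a perfect almost disjoint family off $I$, while yours replaces the two ad hoc genericity constructions with named theorems and a one-line combinatorial step; note also that Talagrand's theorem is already part of this paper's toolkit (it is cited after Corollary 3.5), so your proof fits naturally into its framework. Both routes are sound.
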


\begin{proof} Let $I$ be a uniform c.c.c.\ over Fin proper ideal on $\omega$. If $I$ is somewhere comeager, than there are two members of $I$ whose union is cofinite. To see this, suppose that $s\subseteq \omega$ is finite and $D_{n}$ $(n \in \omega)$ are dense open subsets of $\mathcal{P}(\omega)$ such that all $x \in [s] \cap \bigcap_{n \in \omega} D_{n}$ are in $I$, where $[s] = \{ x \subseteq \omega \mid x \cap (max(s) + 1) = s\}$. It is relatively straightforward to build sets $x$, $y$ in $[s] \cap \bigcap_{n \in \omega} D_{n}$ whose union is $\omega\setminus ((max(s) + 1)\setminus s)$. Similarly, if $I$ is meager, then there is a perfect set of subsets of $\omega$ which are almost disjoint and all not in $I$. The construction of such a perfect set is similar, using a collection of dense open sets $D_{n}$ $(n \in \omega)$ such that $\bigcap_{n \in \omega}D_{n}$ is disjoint from $I$.
\end{proof}

\section{The Filter Dichotomy}

\begin{df} The \emph{Filter Dichotomy} is the statement that for each nonmeager filter $F$ on $\omega$, there is a
finite-to-one function $h \colon \omega \to \omega$ such that $\{ h[x] \mid x \in F\}$ is an ultrafilter.
\end{df}

Blass and Laflamme showed \cite{BlLa} that the Filter Dichotomy holds in models previously considered by Miller \cite{Mi} and
Blass and Shelah \cite{BlSh87, BlSh89}.

\begin{thrm}\label{umeasmeager} The Filter Dichotomy implies that universally measurable uniform filters on $\omega$ are meager.
\end{thrm}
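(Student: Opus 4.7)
The plan is to argue by contrapositive: assume $F$ is a universally measurable, uniform filter on $\omega$ that is not meager, and derive a contradiction. Applying the Filter Dichotomy to $F$ yields a finite-to-one $h \colon \omega \to \omega$ such that $\{h[x] \mid x \in F\}$ is an ultrafilter. I would then identify this with the pushforward filter $U := \{y \subseteq \omega \mid h^{-1}[y] \in F\}$: since $h^{-1}[h[x]] \supseteq x$, the family $\{h[x] \mid x \in F\}$ is contained in the proper filter $U$, and an ultrafilter contained in a proper filter must equal it. Because $h$ is finite-to-one, preimages of cofinite sets are cofinite, so $U$ contains every cofinite subset of $\omega$, making $U$ a uniform, hence nonprincipal, ultrafilter.

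Next I would check that $U$ inherits universal measurability from $F$. The map $g \colon \mathcal{P}(\omega) \to \mathcal{P}(\omega)$ defined by $g(y) = h^{-1}[y]$ is continuous, because $g(y) \cap n$ is determined by the finite set $y \cap h[n]$. Since $U = g^{-1}[F]$ and universal measurability is preserved by preimages under Borel maps (given a complete finite Borel measure $\nu$ on the source, push it forward through $g$, use universal measurability of $F$ to approximate it by a Borel set modulo a $g_{*}\nu$-null set, and pull this approximation back through $g$), $U$ is universally measurable.

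It then remains to show that no uniform ultrafilter on $\omega$ can be universally measurable. Let $\mu$ be the fair coin product measure on $\mathcal{P}(\omega) \cong \creals$; then $U$ is $\mu$-measurable. The complementation map $c \colon A \mapsto \omega \setminus A$ is a $\mu$-preserving involution, and ultrafilter primality gives $c[U] = \mathcal{P}(\omega) \setminus U$, forcing $\mu(U) = 1/2$. On the other hand, $U$ is invariant under finite symmetric differences (being a filter containing the cofinite sets), and the finite-flip symmetries of $\mu$ show that any such measurable set is independent of every finite cylinder; a generating-class argument then makes $U$ independent of itself, so $\mu(U) \in \{0,1\}$. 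This contradicts $\mu(U) = 1/2$.

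I expect the only real subtlety to be the preservation of universal measurability under the continuous preimage $g^{-1}$; the $0$--$1$ law argument at the end and the Filter Dichotomy application are otherwise routine once the connection between the pushforward filter $U$ and the given ultrafilter is in place.
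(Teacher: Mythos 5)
Your proof is correct, and it follows the same overall strategy as the paper: apply the Filter Dichotomy to a putative nonmeager universally measurable uniform filter $F$, transfer universal measurability along the finite-to-one map $h$ to the image ultrafilter, note that the image is nonprincipal because $h$ is finite-to-one and $F$ is uniform, and contradict Sierpi\'{n}ski's theorem that nonprincipal ultrafilters are not Lebesgue measurable. The one genuine difference is in how the transfer is organized. The paper works with the forward image: it introduces the perfect set $S$ of unions of fibers of $h$, checks that $G(x)=h[x]$ restricts to a homeomorphism of $S$ onto $\mathcal{P}(\omega)$, that $F\cap S$ is universally measurable in $S$, and that $G[F\cap S]=G[F]$, then invokes Remark \ref{immeas}. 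You instead observe that the ultrafilter equals the pushforward filter $U=\{y \mid h^{-1}[y]\in F\}=g^{-1}[F]$ for the continuous map $g(y)=h^{-1}[y]$, and use the general fact that universal measurability is preserved under Borel (here continuous) preimages. Since $g$ is exactly the inverse of $G\restrict S$, the set you produce is literally the paper's $G[F\cap S]$; but your preimage formulation is a bit cleaner, as it avoids the perfect-set bookkeeping and the verification that $F\cap S$ is universally measurable relative to $S$, at the cost of having to justify the (standard, and correctly sketched) preimage-preservation lemma. You also spell out the complementation-plus-zero--one-law argument for the nonmeasurability of a nonprincipal ultrafilter, which the paper simply cites to \cite{Si}. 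All the steps you flag as needing care do go through as you describe.
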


\begin{proof} Let $F$ be a nonmeager universally measurable uniform filter on $\omega$, and let $h \colon \omega \to \omega$ be finite-to-one
such that
$\{ h[x] \mid x \in F\}$ is an ultrafilter. Let $$S = \{ \bigcup_{n \in Z}h^{-1}[\{n\}] \mid Z \subseteq \omega\},$$ and
let $G \colon \mathcal{P}(\omega) \to \mathcal{P}(\omega)$ be defined by $G(x) = h[x]$. Then:
\begin{enumerate}
\item $S$ is a perfect subset of $\mathcal{P}(\omega)$.
\item $F \cap S$ is a universally measurable subset of $S$.
\item $G \restrict S \colon S \to \mathcal{P}(\omega)$ is a homeomorphism.
\item $G[F \cap S] = G[F]$ is not Lebesgue measurable.
\end{enumerate}
This gives a contradiction, by Remark \ref{immeas}.

The first and third items above are easy, and the fourth follows from the fact that nonprincipal ultrafilters
are not Lebesgue measurable (\cite{Si}; to see that $G[F]$ has to be nonprincipal, note that $F$ is uniform and $h$ is finite-to-one).
To see the second item above, suppose that $\mu$ is a finite Borel measure on $S$. Define a measure $\mu^{*}$ on
$\mathcal{P}(\omega)$ by letting $\mu^{*}(A) = \mu(A \cap S)$ for all Borel $A \subseteq \mathcal{P}(\omega)$.
Since $F$ is universally measurable, there exist Borel sets $B$ and $N$ such that $F \bigtriangleup B \subseteq N$ and
$\mu^{*}(N) = 0$. Then $\mu(N \cap S) = 0$, and $(F \cap S) \bigtriangleup (B \cap S) \subseteq (N \cap S)$.
\end{proof}

Putting together the Blass-Laflamme result with Lemmas \ref{medcc} and \ref{ccnb} and Theorem \ref{umeasmeager}, we have the following.

\begin{cor}\label{nomed} If ZFC is consistent, then so is ZFC + ``there exist no medial limits."
\end{cor}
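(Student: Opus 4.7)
The plan is purely one of assembly: combine the Blass--Laflamme consistency proof for the Filter Dichotomy with Theorems \ref{medcc}, \ref{ccnb}, and \ref{umeasmeager}. I would begin by invoking Blass and Laflamme to fix a model of ZFC + Filter Dichotomy (whose existence is supplied by the cited work of Blass, Laflamme, Miller, and Shelah). The rest of the argument then shows that in any such model there can be no medial limit, which is enough.

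Working in a model of the Filter Dichotomy, suppose toward a contradiction that $f$ is a medial limit and set $I = \{x \subseteq \omega \mid f(x) = 0\}$. By Theorem \ref{medcc}, $I$ is a universally measurable, uniform, c.c.c.\ over Fin ideal, and it is proper because $f(\omega) = 1$. Let $F = \{x \subseteq \omega \mid \omega \setminus x \in I\}$ be the dual filter. Since complementation $x \mapsto \omega \setminus x$ is a self-homeomorphism of $\mathcal{P}(\omega)$, Remark \ref{immeas} implies that $F$ is itself a universally measurable uniform filter. Theorem \ref{umeasmeager} --- the only step that directly consumes the Filter Dichotomy --- then forces $F$ to be meager. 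Applying complementation once more, $I$ is meager as well; in particular $I$ has the property of Baire, contradicting Theorem \ref{ccnb}. This is the desired contradiction.

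I do not anticipate any real obstacle here, since each ingredient has already been established and the glue is routine. The only small points worth verifying are that $I$ is genuinely a uniform, proper, universally measurable ideal (uniformity because $f$ is $0$ on singletons, properness because $f(\omega) = 1$, universal measurability because $I = f^{-1}(\{0\})$ and $f$ is universally measurable), and that these properties transfer to the dual filter $F$ --- both of which are immediate from Remark \ref{immeas} applied to complementation.
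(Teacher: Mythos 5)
Your proof is correct and is exactly the assembly the paper intends: the paper's "proof" of Corollary \ref{nomed} is literally the one sentence "Putting together the Blass--Laflamme result with Lemmas \ref{medcc} and \ref{ccnb} and Theorem \ref{umeasmeager}, we have the following," and you have filled in the same chain (pass to the dual filter $F = \{x \mid f(x)=1\}$, apply Theorem \ref{umeasmeager} to get meagerness, transfer back to $I$ via the complementation homeomorphism, and contradict Theorem \ref{ccnb}). No gaps.
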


Talagrand \cite{Ta80} proved that a filter on $\omega$ is meager if and only if there exists a finite-to-one function $h \colon
\omega \to \omega$ such that $\{ h[x] \mid x \in F\}$ is the set of cofinite subsets of $\omega$. 
The Filter Dichotomy can then be restated
as: for every uniform filter $F$ on $\omega$ there is a finite-to-one function $h \colon \omega \to \omega$ such that $\{ h[x] \mid x \in F\}$ is
either the cofinite filter or a nonprincipal ultrafilter. The condition that $h$ is finite-to-one is used in the proof of Theorem \ref{umeasmeager}
only to get the image ultrafilter to be nonprincipal, and for this one needs only that $h[x]$ is infinite for all $x \in F$. Blass has pointed out to us that the finite-to-one condition can be relaxed in the other (meager) case as well, via the following argument.

\begin{thrm} Suppose that $f \colon \mathcal{P}(\omega) \to [0,1]$ is finitely additive for
disjoint sets, sends singletons to $0$ and $\omega$ to $1$. Let $F = \{ x \subseteq \omega \mid f(x) = 1\}$, and let
$h \colon \omega \to \omega$ be such that $f(h^{-1}[\{n\}]) = 0$ for all $n \in \omega$. Then
$\{ h[x] \mid x \in F\}$ is not the cofinite filter on $\omega$.
\end{thrm}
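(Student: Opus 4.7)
The plan is to argue by contradiction: assume that $\{h[x] \mid x \in F\}$ equals the cofinite filter, so in particular $h[x]$ is cofinite for every $x \in F$. I will produce some $y \in F$ whose image $h[y]$ is \emph{not} cofinite, yielding the contradiction. To that end, define $g \colon \mathcal{P}(\omega) \to [0,1]$ by $g(Z) = f(h^{-1}[Z])$. Since $h$-preimages of disjoint sets are disjoint, $g$ inherits finite additivity from $f$; we have $g(\omega) = f(\omega) = 1$, and the hypothesis on $h$ gives $g(\{n\}) = f(h^{-1}[\{n\}]) = 0$ for every $n \in \omega$. The entire argument will reduce to locating an infinite $Z \subseteq \omega$ with $g(Z) = 0$.

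The main obstacle, which I view as the combinatorial heart of the proof, is the following auxiliary claim: any finitely additive probability measure on $\omega$ giving $0$ to singletons assigns $0$ to some infinite set. I will prove this by fixing an uncountable almost disjoint family $\mathcal{A} = \{A_\alpha : \alpha < \omega_1\}$ of infinite subsets of $\omega$, whose existence is a standard ZFC fact. For any finitely many $A_{\alpha_1}, \ldots, A_{\alpha_k} \in \mathcal{A}$, each pairwise intersection $A_{\alpha_i} \cap A_{\alpha_j}$ is finite and therefore $g$-null; disjointifying the union $A_{\alpha_1} \cup \cdots \cup A_{\alpha_k}$ and applying finite additivity gives $g(A_{\alpha_1}) + \cdots + g(A_{\alpha_k}) = g(A_{\alpha_1} \cup \cdots \cup A_{\alpha_k}) \leq 1$. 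Consequently $\{\alpha < \omega_1 : g(A_\alpha) > 1/n\}$ has fewer than $n$ members for every positive integer $n$, so $\{\alpha < \omega_1 : g(A_\alpha) > 0\}$ is countable. Since $\mathcal{A}$ is uncountable, some $A_\alpha$ satisfies $g(A_\alpha) = 0$, providing the desired infinite set $Z$.

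Finally, with such $Z$ in hand, set $y = \omega \setminus h^{-1}[Z]$. Finite additivity yields $f(y) = 1 - f(h^{-1}[Z]) = 1 - g(Z) = 1$, so $y \in F$. On the other hand, every $i \in y$ satisfies $h(i) \notin Z$, whence $h[y] \subseteq \omega \setminus Z$; since $Z$ is infinite, $\omega \setminus Z$ is coinfinite and therefore $h[y]$ is not cofinite. This contradicts the standing assumption that every $h[x]$ with $x \in F$ is cofinite, completing the argument.
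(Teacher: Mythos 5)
Your proof is correct, but its combinatorial core takes a genuinely different route from the paper's. The paper first reduces to the case where $h$ is the identity, via the induced finitely additive measure $y \mapsto f(h^{-1}[y])$, and then runs an explicit recursive bisection: at each stage it splits the current infinite piece into two infinite halves, keeps the half $A_{n}$ of measure at most $2^{-n-1}$, deletes one point from the other half to form $B_{n}$, and observes that $\bigcup_{n} B_{n}$ has measure $1$ (by monotonicity, since $f(B_{0} \cup \ldots \cup B_{n}) \geq 1 - 2^{-n-1}$) while its complement contains the infinitely many deleted points; hence the measure-one filter contains a non-cofinite set. You instead extract an infinite $g$-null set $Z$ by a pigeonhole argument over an uncountable almost disjoint family --- which is essentially the same counting argument the paper uses in Theorem \ref{medcc} to show that the null ideal of a medial limit is c.c.c.\ over Fin --- and then pull back: $\omega \setminus h^{-1}[Z]$ lies in $F$, yet its $h$-image is disjoint from the infinite set $Z$. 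Both are complete ZFC arguments. The paper's construction is explicit and quantitative, but its reduction to the identity case rests on the (true, but not entirely immediate) identity $\{h[x] \mid x \in F\} = \{y \subseteq \omega \mid f(h^{-1}[y]) = 1\}$; your version sidesteps that verification by keeping $h$ in play until the last step, at the cost of invoking an uncountable almost disjoint family, and it has the pedagogical virtue of making visible the link to the c.c.c.-over-Fin machinery already present earlier in the paper.
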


\begin{proof}  First note that $y \mapsto f(h^{-1}[y])$ defines a function from
$\mathcal{P}(\omega)$ to $[0,1]$ which is finitely additive for
disjoint sets, sends singletons to $0$ and $\omega$ to $1$. Furthermore,
$\{ h[x] \mid x \in F\} = \{ y \subseteq \omega \mid f(h^{-1}[y]) = 1\}$, so it suffices to consider
the case where $h$ is the identity function, and to show that $F$ is not the cofinite filter.
Split $\omega$ into two infinite pieces, and let
$A_{0}$ be the one with smaller measure with respect to $f$, or either piece in case
of a tie. Let $B_{0}$ be the other piece minus its first element. Then $f(A_{0}) \leq 1/2$,  $f(B_{0}) \geq 1/2$
and $| \omega \setminus (A_{0} \cup B_{0})| = 1$ (note that finite sets have $f$-measure $0$).
Split $A_{0}$ into two infinite pieces, let $A_{1}$ be the smaller one with respect to $f$, and let $B_{1}$ be the larger one minus its first element. Then $f(A_{1}) \leq 1/4$, $f(B_{0} \cup B_{1}) \geq 3/4$ and $| \omega \setminus (A_{1} \cup B_{0} \cup B_{1})| = 2$.  Continue in this way, defining infinite sets $A_{n}$ and $B_{n}$ for each $n \in \omega$ such that $f(A_{n}) \leq 2^{-n-1}$, $f(B_{0} \cup \ldots \cup B_{n}) \geq 1 - 2^{-n-1}$
and $|\omega \setminus (A_{n} \cup B_{0} \cup \ldots \cup B_{n})| = n+1$.
Then $\bigcup_{n \in \omega}B_{n}$ has $f$-measure $1$, but is not cofinite.
\end{proof}

From this it follows that the nonexistence of medial limits follows from the following weak form of the Filter Dichotomy:
for every uniform filter $F$ on $\omega$ there exists a function $h \colon \omega \to \omega$ such that $h^{-1}[\omega \setminus \{n\}] \in F$
for all $n \in \omega$ and $\{ h[x] \mid x \in F\}$ is either the cofinite filter or a nonprincipal ultrafilter. Since our proof uses only
the universal measurability of the filter of measure $1$ sets, we have that this weak form of the Filter Dichotomy implies that whenever $f$ is a finitely additive measure on $\mathcal{P}(\omega)$ such that $f(\omega) = 1$, the filter $\{ x \subseteq \omega \mid f(x) = 1\}$ is not universally measurable.

Another proof that the Filter Dichotomy implies the nonexistence of medial limits is given by the logical combination of previously known facts.
We include the following definitions for completeness, the second of which is Definition 538A of \cite{F5}. We refer the reader to \cite{Ke, F5}
for the definition of \emph{probability space}.

\begin{df} Given two filters $F$ and $G$ on $\omega$, $F$ is said to be \emph{Rudin-Keisler} above $G$ ($F \geq_{RK} G$) if there is a
a function $h \colon \omega \to \omega$ such that $\{h[x] \mid x \in F\}$ generates the filter $G$.
\end{df}


\begin{df}
A filter $\mathcal{F}$ on $\omega$ is said to satisfy the \emph{Fatou property} if for any probability space $(X, \Sigma, \mu)$, if $\langle E_{n} : n \in \omega\rangle$
is a sequence in $\Sigma$, and $X = \bigcup_{A \in \mathcal{F}} \bigcap_{n \in A} E_{n}$, then $\lim_{n \to \mathcal{F}} \mu(E_{n})$ is defined and
equal to $1$.
\end{df}

The Filter Dichotomy can be restated as saying that every uniform nonmeager filter on $\omega$ is Rudin-Keisler above a nonprincipal ultrafilter on $\omega$. The nonexistence of medial limits under the Filter Dichotomy then follows from the following facts, where $f$ is a supposed
medial limit and $F$ is the filter $\{ x \subseteq \omega \mid f(x) = 1\}$.

\begin{itemize}
\item The filter $F$ is uniform and nonmeager. (\cite{GoTa})
\item The filter $F$ has the Fatou property. (Proposition 538Rd of \cite{F5})
\item Whenever $G$ and $H$ are filters on $\omega$ such that $G \geq_{RK} H$, and $G$ has the Fatou property, then $H$ has the Fatou property. (Proposition 538Ob of \cite{F5})
\item No nonprincipal ultrafilter on $\omega$ has the Fatou property (Exercise 538Xn of \cite{F5})
\end{itemize}

\section{The Semifilter Trichotomy}

A \emph{semifilter} on $\omega$ is a proper subset of $\mathcal{P}(\omega)$ which is closed under finite changes and supersets. 
The proof of Theorem \ref{umeasmeager} uses only these properties of the filter $F$, and thus gives the following. 

\begin{thrm}\label{threetwosem} If $F$ is a universally measurable semifilter
and $h \colon \omega \to \omega$ is a finite-to-one function, then $\{ h[x] \mid x \in F\}$ is universally measurable. 
\end{thrm}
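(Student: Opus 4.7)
The plan is to replay the construction from the proof of Theorem \ref{umeasmeager}, observing that the portion of that argument which established universal measurability of the image depends only on the semifilter axioms of $F$ and on $h$ being finite-to-one; the assumptions that $F$ was a uniform filter entered only when deriving a contradiction from the image being a nonprincipal ultrafilter. So I would reintroduce the same auxiliary objects: $S = \{\bigcup_{n \in Z} h^{-1}[\{n\}] \mid Z \subseteq \omega\}$ and the map $G \colon \mathcal{P}(\omega) \to \mathcal{P}(\omega)$ defined by $G(x) = h[x]$.

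The first three items from the proof of Theorem \ref{umeasmeager} should transfer verbatim: $S$ is a perfect subset of $\mathcal{P}(\omega)$ and $G \restrict S$ is a homeomorphism onto $\mathcal{P}(\omega)$, both by the finite-to-one property of $h$; and the universal measurability of $F$ yields the universal measurability of $F \cap S$ in $S$ (given a finite Borel measure $\mu$ on $S$, extend it to $\mu^{*}$ on $\mathcal{P}(\omega)$ by $\mu^{*}(A) = \mu(A \cap S)$, pick Borel sets $B, N \subseteq \mathcal{P}(\omega)$ witnessing measurability of $F$ with respect to $\mu^{*}$, and intersect with $S$). Remark \ref{immeas} applied to the homeomorphism $G \restrict S$ then gives that $G[F \cap S]$ is a universally measurable subset of $\mathcal{P}(\omega)$.

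The only point where the semifilter hypothesis is actually used is to identify $G[F \cap S]$ with $\{h[x] \mid x \in F\} = G[F]$. Given $x \in F$, I would set $x' = \bigcup_{n \in h[x]} h^{-1}[\{n\}]$; then $x \subseteq x'$, so closure of $F$ under supersets (one of the semifilter axioms) yields $x' \in F$, while $x' \in S$ and $h[x'] = h[x]$ by construction. This gives $G[F] = G[F \cap S]$. I do not foresee any substantive obstacle: the main conceptual content is simply the recognition that uniformity and the filter axioms beyond closure under supersets were never used in the measurability half of Theorem \ref{umeasmeager}, and that the saturation step $x \mapsto x'$ is exactly what links a semifilter to the perfect set $S$.
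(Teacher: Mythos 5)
Your proposal is correct and follows exactly the route the paper intends: the paper proves this theorem simply by remarking that the argument of Theorem \ref{umeasmeager} (the perfect set $S$, the homeomorphism $G \restrict S$, the transfer of universal measurability to $F \cap S$, and Remark \ref{immeas}) uses only closure under supersets and finite changes. Your explicit verification that $G[F] = G[F \cap S]$ via the saturation $x \mapsto \bigcup_{n \in h[x]} h^{-1}[\{n\}]$ is exactly the step the paper leaves implicit, and it is the right one.
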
 

The following
statement was shown by Laflamme \cite{Laf} to hold in the models of the Filter Dichotomy mentioned above. 

\begin{df} The \emph{Semifilter Trichotomy} is the statement that for every semifilter $F$ on $\omega$ there is a finite-to-one function 
$h \colon \omega \to \omega$ such that $\{ h[x] \mid x \in F\}$ is either the cofinite filter, a nonprincipal ultrafilter or the set of all infinite
subsets of $\omega$. 
\end{df} 

Talagrand's proof of his result mentioned after Corollary \ref{nomed} applies to semifilters, showing that a semifilter $F$ on $\omega$ is meager if and only if there is a finite-to-one $h \colon \omega \to \omega$ such that $\{h[x] \mid x\in F\}$ is the cofinite filter. Blass has pointed out to us that the third case of the Semifilter Trichotomy occurs if and only if the semifilter $F$ is comeager. We include a proof for the sake of completeness. 

\begin{thrm}\label{blassco} Suppose that $F$ is a semifilter on $\omega$. Then $F$ is comeager if and only if there exists a finite-to-one function 
$h \colon \omega \to \omega$ such that $\{ h[x] \mid x \in F\}$ is the set of all infinite subsets of $\omega$. 
\end{thrm}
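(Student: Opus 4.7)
My plan is to handle both directions by working with the sets $I_n = h^{-1}[\{n\}]$ associated to a finite-to-one function $h$, and for the harder direction to invoke, by a complementation duality, the generalization of Talagrand's theorem to semifilters stated in the paragraph just before the theorem.

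For the direction assuming such an $h$ exists, I first observe that for any infinite $y \subseteq \omega$ some $x \in F$ has $h[x] = y$, so $x \subseteq h^{-1}[y]$ and closure of $F$ under supersets gives $h^{-1}[y] \in F$. Consequently every $x$ containing $I_n$ for infinitely many $n$ lies in $F$, since such an $x$ is a superset of $h^{-1}[\{n : I_n \subseteq x\}]$. The collection $\bigcap_{m} \bigcup_{n \geq m} \{x : I_n \subseteq x\}$ of all such $x$ is a dense $G_{\delta}$: density uses that $h$ is finite-to-one, so any finite partial specification touches only finitely many $I_n$ and we may freely stipulate $I_n \subseteq x$ for some large $n$. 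Hence $F$ is comeager.

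For the converse, the key trick is to pass to $G = \{\omega \setminus x : x \notin F\}$. Routine verification shows $G$ is a semifilter (properness uses $\omega \in F$, which holds since $F$ is a nonempty semifilter), and comeagerness of $F$ makes $G$ the image of a meager set under the homeomorphism $x \mapsto \omega \setminus x$, hence meager. Applying the semifilter version of Talagrand's theorem to $G$ yields a finite-to-one $h$ such that $y \in G$ iff $h[y]$ is cofinite. Writing $y = \omega \setminus x$ and noting that $h[\omega \setminus x]$ is cofinite precisely when $\{n : I_n \subseteq x\}$ is finite, the equivalence unwinds to: $x \in F$ iff $\{n : I_n \subseteq x\}$ is infinite.

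From this final characterization it is then immediate that $\{h[x] : x \in F\}$ is exactly the collection of infinite subsets of $\omega$: for each $x \in F$, $h[x] \supseteq \{n : I_n \subseteq x\}$ is infinite; and for any infinite $y$, $h^{-1}[y] \in F$ since $\{n : I_n \subseteq h^{-1}[y]\} \supseteq y$, while $h[h^{-1}[y]] = y$. The main obstacle I would expect is the $(\Rightarrow)$ direction, but the duality $F \leftrightarrow G$ reduces it cleanly to the semifilter version of Talagrand's theorem already recorded in the text, so no genuinely new construction is needed.
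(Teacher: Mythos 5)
Your overall architecture is sound and genuinely different from the paper's. The paper proves both directions by direct appeal to the combinatorial characterization of comeager subsets of $\mathcal{P}(\omega)$ (an interval partition $I_{k}$ and a pattern $y$ such that any $x$ agreeing with $y$ on infinitely many $I_{k}$ lies in the set), taking $y=\omega$ for one direction and reading $h$ off the partition for the other. You instead prove the easy direction by exhibiting the dense $G_{\delta}$ set $\bigcap_{m}\bigcup_{n\geq m}\{x : I_{n}\subseteq x\}$ inside $F$ directly (correct, and self-contained), and for the hard direction you pass to the dual semifilter $G=\{\omega\setminus x \mid x\notin F\}$ and invoke the semifilter version of Talagrand's theorem already quoted in the text. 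The duality is verified correctly and this reduction is a legitimate alternative to re-deriving the comeager characterization.

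There is, however, one over-claim. Talagrand's theorem gives you a finite-to-one $h$ such that $\{h[y] \mid y \in G\}$ \emph{is} the cofinite filter; you upgrade this to ``$y\in G$ iff $h[y]$ is cofinite,'' and the implication ``$h[y]$ cofinite $\Rightarrow y\in G$'' does not follow. For a counterexample, let $I_{n}=\{2n,2n+1\}$, let $h$ map $I_{n}$ to $n$, and let $G=\{y \mid \{n : I_{n}\subseteq y\}\text{ is cofinite}\}$: this is a meager semifilter whose $h$-image is exactly the cofinite filter, yet the set of even numbers maps onto $\omega$ without belonging to $G$. Consequently your unwound equivalence ``$x\in F$ iff $\{n : I_{n}\subseteq x\}$ is infinite'' is only justified in the direction ``$\{n : I_{n}\subseteq x\}$ infinite $\Rightarrow x\in F$.'' Fortunately that is the only direction you actually need: it yields that $h^{-1}[z]\in F$ for every infinite $z$ (with $h[h^{-1}[z]]=z$, using that $h$ must be surjective since $\omega$ is in the image filter), so every infinite set is realized. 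The remaining half --- that $h[x]$ is infinite for every $x\in F$ --- should not be derived from the unjustified implication as you do; it is instead immediate from the facts that a proper semifilter contains no finite sets (else it would contain $\emptyset$ and hence everything) and that $h$ is finite-to-one. With that substitution the proof is complete.
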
 

\begin{proof} In each direction we will use the following characterization : a set $Z \subseteq \mathcal{P}(\omega)$ is comeager if and only if there exist a division of $\omega$ into finite sets $I_{k}$ $(k \in \omega)$ and a set $y \subseteq \omega$ intersecting each $I_{k}$ such that every $x \subseteq \omega$ for which $x \cap I_{k} = y \cap I_{k}$ for infinitely many $k$ is in $F$ (see \cite{MaPrWe}, for instance).

For the forward direction of the theorem, applying the assumption that $F$ is comeager we can let $h$ be the function which maps $I_{k}$ to $k$, where the set of $I_{k}$'s is as above. For the reverse direction, we can let $I_{k} = h^{-1}[\{k\}]$ and let $y = \omega$. Then if $x$ is a subset of $\omega$ with the property that $x \cap I_{k} = y \cap I_{k} = I_{k}$ for infinitely many $k$, there exists a $z \in F$ such that $h[z] = \{ k \in \omega \mid
x \cap I_{k} = I_{k}\}$. Then $z \subseteq x$, which implies that $x \in F$. 
\end{proof} 


Putting together Theorems \ref{threetwosem} and \ref{blassco} with Talagrand's characterization of meager semifilters, 
we have the following.

\begin{thrm} The Semifilter Trichotomy implies that universally measurable semifilters on $\omega$ have the property of Baire. 
\end{thrm}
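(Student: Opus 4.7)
The plan is to apply the Semifilter Trichotomy to the given universally measurable semifilter $F$, producing a finite-to-one $h\colon\omega\to\omega$ such that $\{h[x]\mid x\in F\}$ falls into one of three cases, and then handle each case using the results already in place.

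First, I would note that in two of the three cases the conclusion is immediate. If $\{h[x]\mid x\in F\}$ is the cofinite filter, then Talagrand's characterization (as noted in the discussion before Theorem \ref{blassco}) tells us that $F$ is meager, hence has the property of Baire. If $\{h[x]\mid x\in F\}$ is the set of all infinite subsets of $\omega$, then Theorem \ref{blassco} tells us that $F$ is comeager, which again gives the property of Baire.

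The work is in ruling out the remaining case. Suppose $\{h[x]\mid x\in F\}$ is a nonprincipal ultrafilter $U$. By Theorem \ref{threetwosem}, $U$ is universally measurable. But the standard product (coin-flipping) measure on $\mathcal{P}(\omega)$ is a complete $\sigma$-finite Borel measure, so $U$ would be measurable with respect to it; by the classical argument of Sierpi\'{n}ski (cited in the proof of Theorem \ref{umeasmeager} for exactly this purpose), a nonprincipal ultrafilter on $\omega$ cannot be Lebesgue measurable, a contradiction. Hence this case does not occur, and the theorem follows.

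The main obstacle, such as it is, lies in checking that the third case really contradicts universal measurability rather than merely Lebesgue measurability; but this is handled by the observation that Lebesgue measure on $\mathcal{P}(\omega)$ (identified with Cantor space) is the completion of a Borel probability measure, so universal measurability subsumes Lebesgue measurability. Everything else is simply a matter of lining up the three statements (\ref{threetwosem}), (\ref{blassco}), and Talagrand's characterization of meager semifilters, and reading off the conclusion from the Semifilter Trichotomy.
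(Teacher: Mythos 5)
Your proposal is correct and follows essentially the same route as the paper, which simply assembles Theorem \ref{threetwosem}, Theorem \ref{blassco}, and Talagrand's characterization of meager semifilters, with the ultrafilter case excluded via Sierpi\'{n}ski exactly as you describe. The only quibble is a harmless labeling slip (you call the ultrafilter case ``the third case'' near the end, though in the trichotomy as stated it is the second).
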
 

It is an open question (asked in \cite{MaPrWe}) whether consistently all universally measurable sets of reals have the property of Baire.

\noindent Department of Mathematics\\
Miami University\\
Oxford, Ohio 45056\\United States\\
larsonpb\@@muohio.edu

\end{document}